\newtheorem{thm}{Theorem}[section]
\newtheorem{lem}[thm]{Lemma}
\newtheorem{cor}[thm]{Corollary}
\newtheorem{Defn}[thm]{Definition}
\newtheorem{defin}[thm]{Definition}
\def\.{\hskip.06cm}
\def\.{\hskip.06cm}
\def\<{\langle}
\def\>{\rangle}
\begin{document}

\title[Tiling with small tiles]{Tiling with small tiles}

\author[Anne~Kenyon]{ \ Anne~Kenyon$^\star$}
\author[Martin~Tassy ]{ \ Martin~Tassy$^\dagger$}

\begin{abstract}
We look at sets of tiles that can tile any region of size greater than 1 on the square grid. This is not the typical tiling question, but relates closely to it and therefore can help solve other tiling problems -- we give an example of this. We also present a result to a more classic tiling question with dominoes and L-shape tiles. 
\end{abstract}

\thanks{\thinspace ${\hspace{-.45ex}}^\dagger$Department of Mathematics,
UCLA, Los Angeles, CA, 90095.
\hskip.06cm
Email:
\hskip.06cm
\texttt{\{mtassy\}@math.ucla.edu}}

\thanks{\thinspace ${\hspace{-.001ex}}^\star$Weizmann Institute Faculty of Mathematics and Computer Science, Rehovot, Israel. \hskip.06cm
Email:
\hskip.06cm
\texttt{anne.kenyon@weizmann.ac.il}.}

\maketitle

\section{Introduction}
Consider a set of tiles $T$ and a connected region $R$ in the lattice $\mathbb{Z}^2$. Deciding if  $R$ is tileable with copies of the tiles in $T$ is a classical \emph{tileability problem}, occupying a major role in Combinatorics and Discrete Geometry. For a generic set of tiles, the answer to the tileability problem is hard to provide, and even the easier problem of deciding the tileability of simply connected regions has been shown to be NP-complete for numerous models (see amongst others ~\cite{Moore2001hard}, ~\cite{PakYang2013} and ~\cite{PakYang2013bis}).

 As a consequence, most research related to tileability problems focuses on small sets of small tiles. That is, sets of tiles containing no more than two tiles of small size, including or not rotations of those tiles. However, even in this setting, the complexity of the answers to the tileability problem are very diverse. Some sets of tiles exemplifying different behaviours include: the set of dominoes, for which the tileability can be decided in polynomial time for general regions (see amongst many others ~\cite{Kasteleyn1987},~\cite{fournier1996pavage},~\cite{Thiant2003}), the set of two rectangles, for which the tileability is decidable in polynomial time for simply connected regions but is NP-complete in the general case (see~\cite{beauquier1995tiling},~\cite{kenyon1992tiling}), and also the set of $L$-shape tiles and their rotations, for which the tileability is NP-complete even for simply connected regions (see~\cite{Moore2001hard}). Our paper is inscribed in this context of tiling with small tiles.
 
We focus on two problems which we believe are specifically interesting among sets of small tiles. This first one concerns tiling with domino and L-shape tiles as represented in Figure \ref{fig:tile}. This set of tiles has a strategical role amongst small tiles problems  because it is the minimal set for which the \emph{Conway group} of the tiling is trivial. In other words, this is the simplest set of tiles $T$ such that any region of $\mathbb{Z}^{2}$ is connected to the empty set by a sequence of adding and removing tiles in $T$. In particular, the height functions method developed by Conway and Lagarias in~\cite{conway1990tiling} and which has proved to be very useful for numerous models (dominos~\cite{fournier1996pavage}, bars~\cite{kenyon1992tiling} ~\cite{beauquier1995tiling}, and ribbon tilings~\cite{sheffield2002ribbon}) cannot work in this context. However, suprisingly, we show that the complexity of the tileability problem for general polyominoes is quasi-linear in this setting and we obtain the following theorem:
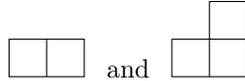
\begin{figure}[h]
 \label{fig:tile}
\begin{center}
\begin{tikzpicture}[scale = 0.5]
\draw (0,1)--(0,0)--(2,0)--(2,1)--(0,1);
\draw (1,0)--(1,1);
\end{tikzpicture}
\, and\,\,\,
\begin{tikzpicture}[scale=0.5]
\draw (3,1)--(3,0)--(5,0)--(5,1)--(4,1)--(4,2)--(5,2)--(5,1);
\draw (4,0)--(4,1)--(3,1);
\end{tikzpicture}
\end{center}
\caption{Set $T$, consisting of the domino (left) and L-tile (right)}
\end{figure}

\begin{thm}
\label{thm:twotiles}
Let $R$ be a connected polyomino in the plane $\mathbb{Z}^{2}$, and let $n=|R|$ be the size of $R$. There exists an algorithm that decides in time $O(n\log n)$ whether $R$ can be tiled with the two tiles of Figure~\ref{fig:tile}.
\end{thm}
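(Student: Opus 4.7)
The plan is to establish a local forcing principle at corners of $R$ and apply it iteratively via a greedy corner-peeling algorithm. Because the Conway group of these tiles is trivial, the standard height-function approach fails, but the flexibility afforded by mixing dominoes and L-tiles suggests that tiling decisions can instead be made locally, cell-by-cell, with no backtracking. As a preprocessing step, I would sort the cells of $R$ lexicographically (in $O(n\log n)$ time) and store them in a balanced search tree indexed by coordinates, which supports neighborhood queries and deletions in $O(\log n)$ time.

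The combinatorial heart of the proof would be a corner lemma of the following form: for the lexicographically smallest remaining cell $c \in R$, a short case analysis on the cells of $R$ in a bounded window around $c$ either certifies that $R$ is not tileable, or designates a canonical tile $t$ containing $c$ such that $R$ is tileable if and only if $R \setminus t$ is. The forward direction is immediate, so the real content is the backward direction: any tiling of $R$ must be modifiable into one that covers $c$ with $t$. This would be proved by an exchange argument that swaps boundaries among two or three tiles near $c$; the key observation is that whenever enough cells are present around the corner, the local swap moves generated by dominoes and L-tiles are rich enough to reroute any tiling to match the canonical choice.

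Given the corner lemma, the algorithm repeatedly applies the rule and deletes the designated tile's cells, terminating after at most $n/2$ iterations since each tile covers at least two cells. Each iteration is dominated by the $O(\log n)$ cost of deleting cells and locating the next minimum in the search tree, yielding the overall $O(n\log n)$ bound claimed in the theorem.

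The main obstacle will be proving the corner lemma, and specifically the exchange argument underlying its backward direction. The case analysis around $c$ splits into several sub-cases depending on which of its neighbors and second-neighbors lie in $R$, and one must verify in each sub-case that the canonical choice is compatible with at least one global tiling whenever one exists. The most delicate interactions arise where the polyomino is narrow near $c$, for instance a width-one corridor emerging at the corner, which forces all covering tiles to be dominoes; the canonical choice there must be tuned to correctly propagate parity constraints outward along the corridor. Conversely, forbidden configurations such as an isolated cell or an odd-length width-one arm must also be flagged by the case analysis so that the algorithm correctly reports untileability when no tiling exists.
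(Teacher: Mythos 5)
There is a genuine gap, and it is located exactly where you predicted: the corner lemma. As formulated --- a \emph{bounded} window around the lexicographically smallest cell $c$ determines a canonical tile $t$ with $R$ tileable iff $R\setminus t$ is --- the lemma is false. Take $R_1=\{(0,1),(1,1)\}\cup\{(x,2):1\le x\le L\}$ with $L$ even and $R_2$ the same region with $L$ odd; both are tileable. In $R_1$ the cell $(0,1)$ \emph{must} be covered by the horizontal domino $\{(0,1),(1,1)\}$: if instead the L-tile $\{(0,1),(1,1),(1,2)\}$ is used, the remaining top row $\{(2,2),\dots,(L,2)\}$ has odd length and no cells above or below, so it cannot be finished. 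In $R_2$ the situation is reversed: the domino at $\{(0,1),(1,1)\}$ leaves a top row of odd length $L$ that can only be covered by horizontal dominoes, so the L-tile is forced at $(0,1)$. For any fixed radius $k$, choosing $L>k+2$ makes the radius-$k$ neighborhoods of $(0,1)$ in $R_1$ and $R_2$ identical, so no bounded-window rule can make the correct canonical choice in both. The decision genuinely depends on the parity of the length of an entire row, which is unbounded information.

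The paper's algorithm resolves this by making the peeling unit a whole \emph{top row} rather than a single cell: even-length top rows are filled with dominoes, top-left corners get a forced domino, and otherwise a single L-tile is placed at the left end of the leftmost (odd) top row. Computing row lengths costs $O(n)$ in total, so the complexity claim survives. Correspondingly, the exchange argument you hope to carry out with ``two or three tiles near $c$'' is also necessarily non-local: the paper's Lemmas 2.1 and 2.2 slide an L-tile across an arbitrarily long row using repeated applications of two local moves, together with an induction on the height of the row (the odd-length case recurses into the row below). Your data-structure and termination analysis are fine, and your instinct that a greedy, no-backtracking peeling works is correct; but to repair the proposal you must replace ``bounded window around $c$'' with ``the entire top row containing $c$ (and its parity)'' and replace the constant-size exchange with the row-length induction.
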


Moreover, like in the domino and ribbon tiles cases, we show that in this setting there exist two local moves that connect any two tilings of a given region. 

The second problem we study in this paper considers, in a sense, the opposite approach. Which sets of tiles can tile \emph{any} region? We show how to find these sets in any dimension. More precisely and among other things, we prove the following result:
\begin{thm}\label{thm:ndimensions}
There exists a set of $\frac{d(d+3)}{2}$ tiles that can tile any polyomino in dimension $d \geq 2$, excluding the unit cube. There exists a linear-time algorithm to find a tiling of the polyomino with these tiles. 
\end{thm}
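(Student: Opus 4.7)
The plan is to exhibit an explicit tile set $\mathcal{T}$ of size $\frac{d(d+3)}{2}$ and to prove by strong induction on $n=|R|$ that every connected polyomino $R\subset\mathbb{Z}^d$ with $n\ge 2$ can be tiled by $\mathcal{T}$. Motivated by the decomposition $\frac{d(d+3)}{2}=d+\binom{d+1}{2}$, a natural candidate is to take the $d$ axis-aligned dominoes together with $\binom{d+1}{2}$ carefully chosen L-trominoes supported in the coordinate 2-planes, with enough orientations so that they collectively cover every local configuration at a lexicographically extremal cell. The base case $n=2$ is immediate, since every polyomino of size $2$ is a domino and hence lies in $\mathcal{T}$.

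For the inductive step, I would fix the lexicographic order on $\mathbb{Z}^d$ and let $c$ be the minimum cell of $R$. By minimality, no neighbor of $c$ in a negative coordinate direction can lie in $R$, so the local structure of $R$ near $c$ is entirely determined by which of the $d$ positive neighbors of $c$ (and possibly the cells one step further) lie in $R$. A finite case analysis on this local configuration selects a tile $T\in\mathcal{T}$ containing $c$ with the crucial property that every connected component of $R\setminus T$ has size at least $2$. The inductive hypothesis then applies to each such component, completing the step.

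The main obstacle is the case analysis itself: the troublesome configurations are those in which a positive neighbor of $c$ has very few further neighbors in $R$, so that a naive choice of $T$ strands an isolated cell and breaks the induction. The tile selection must anticipate this by looking one step deeper into the local neighborhood and picking the right orientation of L-tromino or domino. Verifying that $\frac{d(d+3)}{2}$ tiles suffice for every local pattern is the combinatorial heart of the argument, and it is also what pins down the exact count (no smaller set works, because each extra configuration at $c$ that is not covered by a domino essentially forces one new L-tromino). The exclusion of the unit cube is necessary, since a single cell can be covered by no tile of size $\ge 2$.

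For the linear-time algorithm, $R$ is stored in an indicator array (or hash table) of size $O(n)$ supporting $O(1)$ membership queries and deletions. The initial lex-minimum is found by one $O(n)$ pass; after each tile removal, the new lex-minimum either lies among the constantly many cells adjacent to the previously removed tile or is reached by resuming the scan where it was paused, so the amortized cost of locating the next extremal cell is $O(1)$. Each iteration removes a tile of size at least $2$, hence there are $O(n)$ iterations, and the total running time is $O(n)$, matching the claim of the theorem.
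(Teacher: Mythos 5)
There is a genuine gap, and it is in the choice of tile set. The set you propose --- $d$ dominoes together with $\binom{d+1}{2}$ L-trominoes --- does not tile every polyomino of size at least $2$ even for $d=2$: the plus-pentomino $\{(0,0),(\pm 1,0),(0,\pm 1)\}$ is a counterexample. Every tile of size $\ge 2$ that covers an arm cell must also cover the center, and once the center is gone the remaining arms are pairwise non-adjacent, so no tiling by dominoes and L-trominoes exists. This is exactly where your inductive step breaks: at the lex-minimum cell $(-1,0)$ every admissible domino or L-tromino strands an isolated cell, and no amount of ``looking one step deeper'' can repair this, because the obstruction is to the tile set itself, not to the order of the case analysis. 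Your parenthetical claim that each uncovered local configuration ``forces one new L-tromino'' is the tell: what it actually forces is a new tile equal to the old tile plus the offending cell, and iterating that forcing from the domino produces not L-trominoes but the full family of \emph{subtiles of the $d$-dimensional plus-tile} (a central cube plus any nonempty subset of its $2d$ unit spokes). That family, counted up to rotation, is the paper's set $S_d$, and the count $\frac{d(d+3)}{2}$ comes from enumerating spoke-patterns ($\lfloor k/2\rfloor +1$ shapes with $k$ spokes), not from $d+\binom{d+1}{2}$ dominoes and L's. For $d=2$ the five tiles are the domino, 3-bar, L-tromino, T-tetromino, and the plus-pentomino itself.

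The paper's induction also runs in the opposite direction from yours, and this is what makes it work without any delicate case analysis at an extremal cell. Rather than removing a tile and recursing on the components (where the stranded-cell problem lives), it fixes an increasing chain of connected subregions $R_2\subset R_3\subset\cdots\subset R_n=R$, starts by placing one domino, and at each step absorbs the new cell $u$ by locally retiling the single tile $s$ that covers a neighbor of $u$: the defining ``fountain'' property of $S_d$ is precisely that $s$ plus any adjacent cube is again tileable by $S_d$. Each step is $O(1)$ local work, giving the linear-time algorithm. If you want to salvage your top-down removal scheme you would at minimum have to enlarge your tile set to include the plus-tile and the T-shaped subtiles, at which point you would be re-deriving $S_d$; the bottom-up absorption argument is both simpler and dimension-independent.
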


 The rest of the paper is structured as follows. In Section 2 we prove Theorem \ref{thm:twotiles} by describing an algorithm which decides the tileability of a polyomino by the two different tiles represented in Figure~\ref{fig:tile}.  We also show that the complexity of this algorithm is $O(n\log n)$. In Section 3 we prove Theorem \ref{thm:ndimensions} and study a few questions related to sets that can tile any polyomino.  Lastly, we conclude with a Future Research section. 
 
Before diving into the results, we define the setting. A \emph{polyomino} is a connected region with vertices in $\mathbb{Z}^d$, and edges of length 1. A polyomino is not necessarily simply connected and might contain holes. The \emph{size} of a polyomino is the number of unit $d$-dimensional cubes it contains.  A \emph{tile} is a simply connected polyomino with a fixed orientation. A polyomino is \emph{tileable} with a given set of tiles if it can be entirely covered with any number of copies of the tiles in the set, without overlap. We say that set $S$ \emph{tiles} polyomino $R$ if $R$ is tileable by $S$. A \emph{tiling} of $R$ by $S$ is an instantiation of a solution to the tiling problem: a presentation of copies of tiles from $S$ in their locations in $R$ such that they cover $R$. Giving a valid tiling is one way to prove that $R$ is tileable by $S$.


\section{Tiling with the Domino and L-Shape} 

We will now prove Theorem~\ref{thm:twotiles}, resolving the tiling question for set $T$. The algorithm, which we call ALG, is a greedy constructive algorithm that takes as input the polyomino $R$ and outputs a specific tiling of $R$ if the region $R$ is tileable, and an error otherwise. We begin by introducing a few more definitions and clarifications that will be used for the proof. 

Recall that both the polyomino $R$ and the tiles in $T$ are oriented and cannot be rotated. Every square in $R$ on $\mathbb{Z}^2$ has an integral $x$ and $y$ coordinate. Two squares are \emph{adjacent} if they share a side. Without loss of generality let the lowest square in $R$ have $y$-coordinate 1. The \emph{height} of a square is its $y$-coordinate.

A \emph{row} in $R$ is a set of adjacent squares that all have the same $y$-coordinate, and has no square in $R$ adjacent to the left or right of it. The \emph{length} of a row is the number of squares it contains. A \emph{top row} in $R$ is a row with no squares in $R$ adjacent above it. The \emph{leftmost top row} is the top row with the leftmost right end square (with the smallest maximum $x$-coordinate). If there are multiple such top rows, define the highest one to be the leftmost top row (the one with the largest $y$-coordinate). Lastly, the definition of a \emph{top left corner} in $R$ includes two cases: firstly, a top left corner can be a region of at least 2 squares adjacent horizontally that have no squares in $R$ adjacent above them, and the left square has no square in $R$ adjacent below or to the left of it. Secondly, a top left corner can be of vertical and horizontal width two, with no square of $R$ adjacent to the left or above it. It may help in understanding the definition to realize that top left corners require at least one domino to be tileable. Depicted here: 
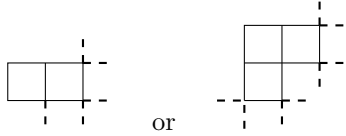
\begin{figure}[h]
\caption{both cases of a \emph{top left corner}}
\begin{center}
\begin{tikzpicture}[scale=0.5]
 \draw (2,0)--(0,0)--(0,1)--(2,1)--(2,0);
 \draw (1,0)--(1,1);
 \draw[thick,dashed] (1,0)--(1,-0.75);
 \draw[thick,dashed] (2,1)--(2,1.75);
 \draw[thick,dashed] (2,1)--(2.75,1);
 \draw[thick,dashed] (2,0)--(2.75,0);
 \draw[thick,dashed] (2,0)--(2,-0.75);
\end{tikzpicture}
 \;\;\; or \;\;\;
 \begin{tikzpicture}[scale = 0.5]
\draw (0,0)--(0,2)--(2,2);
\draw (1,0)--(1,2);
\draw (0,1)--(2,1);
\draw (0,0)--(1,0);
\draw (2,2)--(2,1);
\draw[thick,dashed] (0,-0.75)--(0,0)--(-0.75,0);
\draw[thick,dashed] (2,2)--(2.75,2);
\draw[thick,dashed] (2,2)--(2,2.75);
\draw[thick,dashed] (1,0)--(1.75,0);
\draw[thick,dashed] (1,0)--(1,-0.75);
\draw[thick,dashed] (2,1)--(2.75,1);
\draw[thick,dashed] (2,1)--(2,0.25);
\end{tikzpicture}
\end{center}
\label{topleftcorner}
\end{figure}

\subsection*{Description of the Algorithm}
ALG greedily places tiles in $R$ until the polyomino is fully covered, in which case it returns the tiling, or until the tiles cannot fit, in which case it returns an error. The exact process is described below in Algorithm 1. In this implementation, ALG updates the region each time it places a tile, removing the area covered by that tile from the region. The variable $R_c$ refers to the current subregion of $R$ that remains to be tiled.

Essentially, ALG fills all top rows of even length and top left corners with dominoes, and then, once there are no more even length top rows or top left corners, ALG finds the leftmost top row and attempts to place an L-tile in the leftmost position on that row. If the L-tile does not fit there, ALG returns that the polyomino $R$ is not tileable. 

\begin{algorithm}
\caption{ALG}
\begin{algorithmic}
\State $R_c = R$
\While{$|R_c| > 0$}
    \If {(A:) there is a top row of even length in $R_c$}
        \State tile the entire row with dominoes.
    \ElsIf {(B:) there is a top left corner in $R_c$}
            \State tile with one domino in the corner.
    \Else
        \State Find the leftmost top row $r$ in $R_c$.
        \If {(C:) an $L$-tile can be placed in the leftmost position of $r$}
            \State tile the leftmost position of $r$ with an $L$-tile.
        \Else
            \State output error ``$R$ is not tileable".
        \EndIf 
    \EndIf
    \State $R_c \leftarrow R_c $ without newly tiled subregion.
\EndWhile
\State output the tiling of $R$.
\end{algorithmic}
\end{algorithm}

\subsection*{Correctness of the algorithm}

We want to prove that ALG returns a valid tiling when one exists, and returns an error if and only if no valid tiling exists. \\\\
To this end, we first define in Figures~\ref{move1} and~\ref{move2} two local moves on a tiling. For both local moves, the same subregion is tiled by a different configuration of tiles. These two moves will be used in essence to prove that if a valid tiling exists, then it can be manipulated to transform it into a tiling that ALG would have found. 
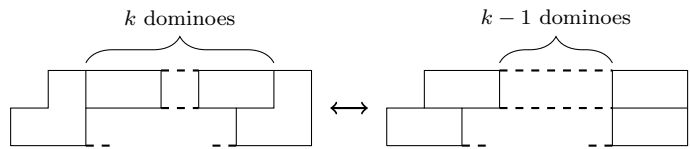
\begin{figure}[h]
\begin{tikzpicture}[scale = 0.5]
\draw (0,0) -- (2,0) -- (2,1) -- (4,1) -- (4,2) -- (1,2) -- (1,1) -- (0,1) -- (0,0);
\draw (2,1) -- (2,2);
\draw[thick,dashed] (2,0) -- (2.75,0);
\draw[thick,dashed] (4,1) -- (5,1);
\draw[thick,dashed] (4,2) -- (5,2);
\draw (6,0) -- (8,0) -- (8,2) -- (7,2) -- (7,1) -- (6,1) -- (6,0);
\draw (6,1) -- (5,1) -- (5,2) -- (7,2);
\draw[thick,dashed] (6,0) -- (5.25,0);
\draw[thick,<->] (8.5,1) -- (9.5,1);
\draw [decorate,decoration={brace,amplitude=10pt}] (2,2.2) -- (7,2.2) node [black,midway,yshift=0.6 cm] {\footnotesize $k$ dominoes};

\draw (10,0) -- (12,0) -- (12,1) -- (13,1) -- (13,2) -- (11,2) -- (11,1) -- (10,1) -- (10,0);
\draw (11,1) -- (12,1);
\draw[thick,dashed] (12,0) -- (12.75,0);
\draw[thick,dashed] (13,1) -- (16,1);
\draw[thick,dashed] (13,2) -- (16,2);
\draw (16,0) -- (18,0) -- (18,2) -- (16,2) -- (16,0);
\draw (16,1) -- (18,1);
\draw[thick,dashed] (16,0) -- (15.25,0);
\draw [decorate,decoration={brace,amplitude=10pt}] (13,2.2) -- (16,2.2) node [black,midway,yshift=0.6 cm] {\footnotesize $k-1$ dominoes};
\end{tikzpicture}
\caption{Local move 1}
\label{move1}
\end{figure}

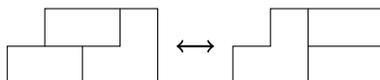
\begin{figure}[h]
\begin{tikzpicture}[scale = 0.5]
\draw (0,0) -- (4,0) -- (4,2) -- (1,2) -- (1,1) -- (0,1) -- (0,0);
\draw (1,1) -- (3,1) -- (3,2);
\draw (2,0) -- (2,1);
\draw[thick,<->] (4.5,1) -- (5.5,1);
\draw (6,0) -- (10,0) -- (10,2) -- (7,2) -- (7,1) -- (6,1) -- (6,0);
\draw (8,0) -- (8,2);
\draw (8,1) -- (10,1);
\end{tikzpicture}
\caption{Local move 2}
\label{move2}
\end{figure}

\begin{lem}\label{lem:onel}
Let $R$ be a polyomino tileable by $T$. Let $r$ be a leftmost top row of odd length with height greater than $1$ in $R$. There exists a tiling of $R$ such that exactly one L-tile is used in the tiling of $r$.
\end{lem}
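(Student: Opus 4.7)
The plan is to start from an arbitrary valid tiling of $R$ (which exists by hypothesis) and apply the local moves of Figures~\ref{move1} and~\ref{move2} to rearrange tiles around $r$ until exactly one L-tile covers a square of $r$.

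The first step is a parity observation. Since $r$ is a top row, no tile of the tiling can occupy a square strictly above $r$. Combined with the fixed orientation of the L-tile (its single top square sitting above the right square of its two-square base), every tile covering a square of $r$ must fall into one of three categories: a horizontal domino entirely inside $r$ (covering $2$ squares of $r$), a vertical domino with its upper square in $r$ (covering $1$ square of $r$), or an L-tile with its single top square in $r$ (covering $1$ square of $r$). Crucially, an L-tile cannot lie with its two-square base inside $r$, since its top square would then protrude above $r$. Writing $h,v,\ell$ for the respective counts, we get $2h+v+\ell=|r|$, and since $|r|$ is odd, $v+\ell$ is odd; in particular $v+\ell\ge 1$.

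The second step reduces $\ell$ to exactly one by iterated applications of the local moves. Move~1 lets us shift an L-tile covering $r$ along $r$ through an adjacent strip of dominoes, while Move~2 exchanges one local configuration involving an L-tile and a vertical domino for another. Using Move~1 repeatedly, we bring any two L-tiles covering $r$ into a neighbouring position; then Move~2 transforms the pair into dominoes, decreasing $\ell$ by $2$ while preserving the parity of $v+\ell$. Iterating, we reach $\ell\in\{0,1\}$. If $\ell=0$, the parity constraint forces $v\ge 1$, and applying Move~2 in reverse to a vertical domino covering $r$ together with a neighbouring domino in the row immediately below $r$ produces a tiling with $\ell=1$. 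The hypothesis that $r$ has height greater than $1$ is essential here: it guarantees that there is room below $r$ in which the lower halves of L-tiles and vertical dominoes can live, and in particular ensures that Move~2 can be carried out.

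The main obstacle is verifying that the configurations demanded by Moves~1 and~2 are actually present whenever we wish to apply them. This requires a local geometric case analysis of the shape of $R$ in the two rows $r$ and the row immediately below it, paying particular attention to the boundary behaviour near the two endpoints of $r$. The leftmost hypothesis enters at this point: it pins down the geometry to the right of $r$ (no top row of $R$ has a strictly smaller rightmost $x$-coordinate), so that no obstruction above $r$ can block the intended rearrangement. Once this local availability check is carried out, the iterative procedure above produces a valid tiling of $R$ in which exactly one L-tile covers a square of $r$, which is the content of the lemma.
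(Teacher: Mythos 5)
Your overall strategy (a parity count of the tiles meeting $r$, followed by repeated local moves to cancel L-tiles in pairs) is the same as the paper's, but the execution has a genuine gap stemming from a misreading of the setup. In Section 2 the tiles are explicitly \emph{oriented and cannot be rotated}, so there is no vertical domino: the only tiles that can meet a top row $r$ are the horizontal domino, contributing $2$ cells to $r$, and the L-tile, contributing exactly $1$ cell (its top cell; as you correctly note, its two-cell base cannot lie in $r$). Hence $2h+\ell=|r|$, so $\ell$ is odd and in particular $\ell\ge 1$ automatically. By admitting a category $v$ of vertical dominoes you only obtain the weaker conclusion that $v+\ell$ is odd, which opens the spurious case $\ell=0$; and your repair of that case --- ``applying Move~2 in reverse to a vertical domino covering $r$'' --- does not work, because both sides of Move~2 (Figure~\ref{move2}) consist solely of horizontal dominoes and L-tiles, so no application of it, forward or backward, can turn a vertical domino into an L-tile. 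In the correct rotation-free setting this entire branch disappears.

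A second, smaller problem is that you have the two moves' roles reversed. Move~1 (Figure~\ref{move1}) is not a sliding move: it takes two L-tiles whose top cells lie in $r$, separated by $k$ dominoes of $r$, directly to an all-domino configuration on the same footprint, i.e., it annihilates a pair in one step; Move~2 is the one that slides a single L-tile two positions along the top row. So the reduction is simply: pick two consecutive L-tiles on $r$; the cells of $r$ strictly between their top cells are covered by tiles meeting $r$ that are not L-tiles, hence by horizontal dominoes lying entirely in that stretch, and the footprint of Move~1 is exactly the union of these tiles, so the move applies and decreases $\ell$ by $2$. This is also the ``local availability check'' that you defer to the final paragraph and never carry out; it is short, but it does need to be said, and it only comes out cleanly once the vertical domino has been removed from the picture.
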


\begin{proof}
Since $r$ is of odd length, there must be an odd number of L-tiles placed on $r$. 
If there exists a valid tiling of $R$ that places more than one L-tile on $r$, then using local move 1 repeatedly, there also exists a valid tiling of $R$ that replaces the L-tiles in pairs until only one remains.  
\end{proof}

\begin{lem}\label{lem:leftend}
Let $R$ be a polyomino tileable by $T$. Let $r$ be the first leftmost top row of odd length in $R$ processed by ALG (in step C of Algorithm 1). If $r$ is at height greater than 1, then there exists a tiling of $R$ such that $r$ is tiled with a unique L-tile at its left end and dominoes everywhere else.
\end{lem}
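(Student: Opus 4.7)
The plan is to start from the tiling provided by Lemma~\ref{lem:onel}, in which $r$ is covered by exactly one L-tile together with dominoes, and to iteratively shift this L-tile toward the left end of $r$ using local move~2.

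I would proceed in three steps. Denote by $(a,y)$ the leftmost cell of $r$ and by $(c,y)$ the position of the L-tile's bump on $r$. First, I would establish that either $c=a$ or $c\geq a+2$: if $c=a+1$, then the three cells of the L-tile would be $(a,y-1),(a+1,y-1),(a+1,y)$, so the two potential in-$R$ neighbors of $(a,y)$, namely $(a+1,y)$ and $(a,y-1)$, would both be occupied by the L-tile and no tile could cover $(a,y)$. Second, assuming $c\geq a+2$, I would observe that $(c-1,y)$ must be covered by a horizontal domino with $(c-2,y)$: the cell $(c-1,y-1)$ below it is already an L-tile cell, so no vertical domino is possible, and $(c,y)$ is the L-tile's bump, so the only remaining possibility is the horizontal domino to the left. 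Third, I would apply local move~2 (Figure~\ref{move2}) to shift the L-tile's bump from $(c,y)$ to $(c-2,y)$ and iterate; after finitely many applications the bump arrives at $(a,y)$, and the remaining $|r|-1$ cells of $r$ are automatically covered by dominoes.

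The main obstacle is that local move~2 also requires the cells $(c-3,y-1)$ and $(c-2,y-1)$ to lie in $R$ and to form a horizontal domino in the current tiling. I would handle this by case analysis, exploiting the hypothesis that $r$ is the first odd-length top row reached by step~C of ALG, so that at this moment $R_c$ contains no top left corner and no even-length top row. For example, if $(c-2,y-1)\notin R$ and $c=a+2$, then the pair $\{(a,y),(a+1,y)\}$ together with the absence of $(a,y-1)$ forms a top left corner of the first type, contradicting the state of $R_c$ when step~C is entered; an analogous argument handles $(c-3,y-1)\notin R$. When the cell $(c-2,y-1)$ exists but is covered by a vertical domino rather than the required horizontal one, I expect to perform an auxiliary rearrangement of the layer at height $y-1$ -- using a variant of local move~1 (Figure~\ref{move1}) applied further below $r$ -- to convert it into a horizontal domino, so that local move~2 may then be applied. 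Carrying out this case analysis and showing that at every step some such reduction is available is the heart of the argument.
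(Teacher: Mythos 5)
Your overall mechanism---reduce to a single L-tile on $r$ via Lemma~\ref{lem:onel}, then slide it leftward two cells at a time with local move~2---is the same as the paper's, and your first two steps are sound: the bump cannot sit at offset one from the left end of $r$, and the cell of $r$ immediately left of the bump must be covered by a horizontal domino lying in $r$. The genuine gap is exactly where you locate it: producing the horizontal domino on row $y-1$ immediately to the left of the L-tile's body. Your proposed fix, ``an auxiliary rearrangement of the layer at height $y-1$ using a variant of local move~1,'' does not work. Local move~1 (Figure~\ref{move1}) operates on \emph{two} L-tiles in a specific side-by-side configuration; it cannot convert a vertical domino hanging from $(c-2,y-1)$, or an L-tile whose bump is $(c-2,y-1)$, into the horizontal domino that local move~2 requires. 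Worse, there is a parity obstruction that no rearrangement confined to row $y-1$ can overcome: if the portion of row $y-1$ lying to the left of the L-tile's body has odd length, it cannot be covered by horizontal dominoes at all, so some tile there must dip into row $y-2$, and moving \emph{that} tile out of the way is a problem of exactly the same type as the original one, one level lower. A one-shot auxiliary move cannot resolve a difficulty that cascades arbitrarily deep.

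The paper's device for closing this gap is an induction on the height of $r$, and your argument needs it (or an equivalent recursion). When the left portion $b$ of row $y-1$ has odd length, delete the L-tile and the left portion of $r$ from $R$; then $b$ becomes the leftmost odd top row of the smaller region, the induction hypothesis places $b$'s own forced L-tile at $b$'s far left end with dominoes everywhere else, and those dominoes are precisely the horizontal ones your slide consumes. (When $b$ has even length it can be taken to be all horizontal dominoes directly, and the base case at height $2$ is where an odd $b$ or $d$ contradicts the tileability hypothesis.) Your top-left-corner argument for the cells missing from $R$ is in the same spirit as the paper's observation that $b$ is at least as long as the left portion of $r$, and can be kept; but the induction step is the missing idea.
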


\begin{proof}  We show that if a valid tiling of $R$ exists, then it is always possible to use local moves $1$ and $2$ so that $r$ is tiled with a unique $L$-tile at its left end. The proof works by induction on the height of  the leftmost top row $r$. \\

\textbf{If $r$ is at height $h \geq 3$ (general case)}. Lemma~\ref{lem:onel} limits the cases necessary to consider to ones with exactly one L-tile, as represented by the following schematic: \\\\\\
\begin{tikzpicture}[scale = 0.75]
\draw[thick,dashed] (-0.6,0) -- (0,0);
\draw[thick,dashed] (0,-0.6) -- (0,0);
\draw (0,0) -- (1,0) -- (1,1) -- (2,1);
\draw[thick,dashed] (2,1) -- (3,1);
\draw (3,1) -- (4,1) -- (4,2) -- (5,2);
\draw[thick,dashed] (5,2) -- (10,2);
\draw (10,2) -- (11,2) -- (11,1) -- (12,1);
\draw[thick,dashed] (12,1) -- (12.6,1);

\draw [blue, fill=blue] (6,1.9) -- (6.9,1.9) -- (6.9,0) -- (5,0) -- (5,0.9) -- (6,0.9) -- (6,1.9);
\draw [blue] (4.1,1) -- (4.1,1.9) -- (5.9,1.9) -- (5.9,1) -- (4.1,1) node[black,midway,yshift=0.33cm] {\footnotesize $a$};
\draw [blue] (1.1,0) -- (1.1,0.9) -- (4.9,0.9) -- (4.9,0) -- (1.1,0) node[black,midway,yshift=0.33cm] {\footnotesize $b$};
\draw [blue] (7,1) -- (7,1.9) -- (10.9,1.9) -- (10.9,1) -- (7,1) node[black,midway,yshift=0.33cm] {\footnotesize $c$};
\draw [blue] (12,0) -- (7,0) -- (7,0.9) -- (12,0.9) node[black,midway,yshift=-0.33cm] {\footnotesize $d$};
\draw[thick,dashed, blue] (12,0) -- (12.6,0);
\draw[thick,dashed, blue] (12,0.9) -- (12.6,0.9);
\end{tikzpicture}\\\\
Subregions $a$ and $c$ are of even length. Also, $b$ is at least as large as $a$, or else it would have been a top left corner and tiled by step B in Algorithm 1. We now need to show that since this tiling exists, there must exist another valid tiling with the L-tile all the way to the left end of the top row. 

Local moves 1 and 2 can be used to slide the L-tile to the left end. Since this does not affect regions $c$ and $d$, they remain validly tiled. There are now two cases to consider:
\begin{itemize}
    \item If $b$ is of even length, then local move 2 can be applied a number of times equal to the number of dominoes in $a$. This slides the L-tile to the left end of $r$, and all other regions remain validly tiled. 
    \item If $b$ is of odd length, notice that if the L-tile and region $a$ are removed to form a new polyomino $R'$, then $b$ becomes the new leftmost top row. Thus, using the induction hypothesis, $b$ is tiled with a unique L-tile at its left end. And therefore, like in the even case, local move $2$ can be applied a number of times equal to the number of dominoes in $a$, and the L-tile can be slid to the left end of $r$.\\
\end{itemize}

\textbf{If $r$ is at height 2 (base case)}. The same schematic applies as for the height $h \geq 3$ case, except that region $R$ does not extend below regions $b$ and $d$. \\\\
In this scenario, using local moves 1 and 2 does not change the parity of regions $b$ and $d$. We again have two cases to consider:
\begin{itemize}
    \item If both $b$ and $d$ are of even length, local move 2 can be applied to slide the $L$-tile to the left end of $r$, and then $b$ and $d$ can be tiled entirely with dominoes, thereby yielding a solution tiling with an $L$-tile in the leftmost position on $r$.
    \item If either $b$ or $d$ is of odd length, then $R$ cannot be tiled. A row of odd length in $R$ must contain an $L$-tile for $R$ to be tileable, but a row of odd length can only contain an $L$-tile if it is at height $>1$.\\
\end{itemize}

\end{proof}

\subsection*{Proof of Theorem~\ref{thm:twotiles}} First note that the algorithm ALG terminates, because step A tiles all top rows of even length, and step C tiles all top rows of odd length. For any non-empty $R_c$, there will be a top row according to our definitions. Therefore, ALG always makes progress and reduces $R_c$ to a smaller $R_c'$ by performing steps A and C, and thus will eventually reach a size zero $R_c$, unless the region is found to be not tileable.\\

We now prove that each step in ALG leads to a valid tiling if and only if $R$ is tileable. 
\begin{itemize}
    \item In step A, tiling an even row with dominoes is at least as good as tiling with L-tiles. If there exists a valid tiling with L-tiles, local move 1 can replace them all with dominoes.
    \item In step B, the domino is the only tile that can fill the top left square of a top left corner, therefore a solution tiling must have a domino in that position.
    \item Step C is proven valid by Lemma~\ref{lem:leftend} for cases where the odd-length leftmost top row $r$ is at height $>1$ in the region $R_c$. If $r$ is at height 1, then there is no possible tiling and the algorithm correctly returns not tileable.
    
\end{itemize}

Thus, we have proven that this algorithm indeed determines whether a polyomino $R$ can be tiled with $S$. The following Corollary is a direct consequence our proof:

\begin{cor}\label{cor:localmovetransform}
Any two tilings of the region $R$ can be transformed from one to the other by a sequence of local moves $1$ and $2$. 
\end{cor}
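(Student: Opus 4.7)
The plan is to use the tiling $\tau^\star$ output by \textsf{ALG} as a canonical representative and to show that every valid tiling of $R$ can be transformed into $\tau^\star$ by a sequence of local moves $1$ and $2$. Both local moves are bidirectional (each figure depicts two equivalent configurations that may be exchanged in either direction), so any such sequence admits a reverse. Consequently, if both $\tau_1$ and $\tau_2$ reduce to $\tau^\star$, concatenating the sequence $\tau_1\to\tau^\star$ with the reverse of $\tau_2\to\tau^\star$ yields the desired sequence $\tau_1\to\tau_2$.

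The reduction $\tau\to\tau^\star$ is proved by strong induction on $|R|$, directly mirroring \textsf{ALG}'s execution. For the inductive step, examine which of steps A, B, or C \textsf{ALG} performs on the current polyomino:
\begin{itemize}
\item In \emph{step A} (even top row $r$), every $L$-tile touching $r$ must have its single upper square inside $r$, so the number of such $L$-tiles has the same parity as $|r|$, namely even. Local move~$1$ converts these $L$-tiles pairwise into dominoes, exactly as in the proof of Lemma~\ref{lem:onel}, leaving $r$ tiled purely by dominoes as in $\tau^\star$.
\item In \emph{step B} (top left corner), because the tiles in $T$ cannot be rotated, neither an $L$-tile nor a vertical domino can cover the top-left square of the corner; a short case analysis of the non-rotatable $L$-shape rules out every placement in both the horizontal-strip case and the $2\times 2$ case. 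Hence the horizontal domino placed by \textsf{ALG} is forced in every valid tiling, and $\tau$ already agrees with $\tau^\star$ there.
\item In \emph{step C} (odd-length leftmost top row $r$ at height $>1$), Lemma~\ref{lem:leftend} explicitly produces a sequence of local moves $1$ and $2$ that rearranges $\tau$ so that a unique $L$-tile sits at the left end of $r$ and the remainder of $r$ is tiled by dominoes, matching $\tau^\star$.
\end{itemize}
After performing the indicated moves, $\tau$ coincides with $\tau^\star$ on the subregion processed by \textsf{ALG}'s current step. Peeling off this subregion leaves a strictly smaller polyomino with a valid residual tiling, and the induction hypothesis closes the argument.

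The main obstacle is verifying the \emph{locality} of the moves invoked by Lemma~\ref{lem:leftend}: one must check that the rearrangements used to slide the $L$-tile to the left end of $r$ touch only tiles in the topmost strip bordering $r$, so that the remainder of the tiling is unaffected and the inductive peeling is legitimate. The schematic in the proof of Lemma~\ref{lem:leftend} is already designed to make this locality transparent --- the moves operate only within the labeled subregions $a$, $b$, $c$, $d$ together with the single $L$-tile --- so once this locality is confirmed, the induction closes and the corollary follows.
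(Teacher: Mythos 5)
Your proposal is correct and matches the paper's intent: the paper offers no separate argument for the corollary, stating only that it is a direct consequence of the proof of Theorem~\ref{thm:twotiles}, which is precisely the canonical-form-plus-reversibility argument you spell out (every tiling reduces to the ALG tiling via local moves $1$ and $2$, and the moves are invertible). Your write-up simply makes explicit the details the paper leaves implicit.
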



\section{Fountain Sets: Sets of Tiles that Tile Any Polyomino}

Which sets of tiles can tile any region? One set is obvious: the set consisting of simply the $d$-dimensional unit cube. Any set that contains the unit cube can also tile any polyomino. But what about others sets, ones that do not contain the unit cube? First of all, a set that does not contain the unit cube cannot tile a polyomino which is of size one. Therefore, if the unit cube is excluded from the tile set, it must also be excluded from the possibilities for the polyomino to be tiled. 

In this section, we present sets that can tile any region, excluding some small regions like the unit cube. We call these sets \emph{fountain sets}. We will begin by defining fountain sets and then study some of their properties, and then give an example of how they can be used to solve other tiling problems. Note that in this section, all tiles can now be rotated. 

\begin{defin}
A tile set $S$ is a \emph{fountain set} if it has the following property: A polyomino $P$ composed of a single tile in $S$ plus any adjacent unit cube is tileable by $S$. 
\end{defin}

Furthermore, we define a fountain set $S$ to be \emph{minimal} if no tile can be removed from $S$ without breaking the fountain property. Consider the set of tiles $S_2$ represented in Figure~\ref{fig:5tiles}. This is an example of a fountain set in 2-dimensions (and in fact is a minimal fountain set).

\begin{figure}[h]
\begin{tikzpicture}[scale=0.50]

\draw[thick,-] (0,1)--(0,0)--(2,0)--(2,1)--(0,1);
\draw[very thin,-] (1,0) -- (1,1);

\draw[thick,-] (3,2)--(3,0)--(5,0)--(5,1)--(4,1)--(4,2)--(3,2);
\draw[-] (4,0)--(4,1)--(3,1);

\draw[thick,-] (6,1)--(6,0)--(9,0)--(9,1)--(6,1);
\draw (7,0) -- (7,1);
\draw (8,0) -- (8,1);

\draw[thick,-] (10,1)--(10,0)--(13,0)--(13,1)--(12,1)--(12,2)--(11,2)--(11,1)--(10,1);
\draw (11,0)--(11,1)--(12,1)--(12,0);

\draw[thick,-] (15,0)--(15,1)--(14,1)--(14,2)--(15,2)--(15,3)--(16,3)--(16,2)--(17,2)--(17,1)--(16,1)--(16,0)--(15,0);
\draw (15,1)--(16,1)--(16,2)--(15,2)--(15,1);
\end{tikzpicture}
\caption{Set $S_2$: a domino, L-tile, 3-bar, T-tile, and plus-tile}
\label{fig:5tiles}
\end{figure}
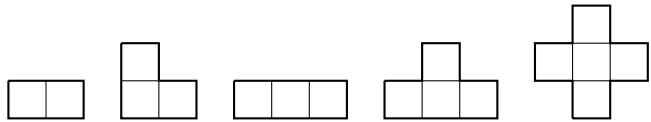

\begin{lem}\label{lemma:addsquare}
The set $S_2$ given in Figure~\ref{fig:5tiles} is a fountain set. 
\end{lem}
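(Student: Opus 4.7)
The proof is by exhaustive case analysis. For each tile $t \in S_2$ and, up to the dihedral symmetries of $t$, each unit cube $c$ adjacent to $t$, I verify that the polyomino $t \cup c$ admits a tiling by $S_2$. Since tiles may be rotated and reflected in this section, the symmetry group of each tile substantially cuts down the number of inequivalent cases.

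First I dispose of the three smaller tiles. Adding a cube to a domino yields either an L-tile or a 3-bar, both already in $S_2$. Adding a cube to a 3-bar yields, up to symmetry, a $1 \times 4$ bar, an L-tetromino, or a T-tile; the first two are tiled by two dominoes and the third lies in $S_2$. Adding a cube to an L-tile yields, up to symmetry, a T-tile, a $2 \times 2$ square, an S-tetromino, or an L-tetromino; each of the latter three is tiled by two dominoes.

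Next I handle the T-tile. Its reflection symmetry reduces the eight candidate cubes to five orbits: extending the base on one side, filling a concave corner beside the stem, adding below one end of the base, adding directly below the center of the base, and extending the stem upward. The fourth orbit produces exactly the plus-tile, which is in $S_2$; each of the remaining four is tiled by two pieces of $S_2$ (a domino plus an L-tile in the first and third cases, a 3-bar plus a domino in the second and fifth).

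Finally I handle the plus-tile. Its $4$-fold dihedral symmetry leaves only two inequivalent positions for the added cube: the tip of one arm, or a concave corner between two adjacent arms. In the tip case, the hexomino is tiled by a domino covering the two outermost cells of the extended arm, together with a T-tile on the remaining four cells (which form a T in the opposite orientation). In the corner case, the hexomino decomposes into two L-tiles. The main obstacle throughout is bookkeeping — making sure the enumeration up to symmetry is complete — rather than any conceptual difficulty; once the cases are listed, each tiling is immediate.
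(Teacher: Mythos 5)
Your proof is correct and is essentially the paper's own argument: the paper proves this lemma by exhaustively listing, in Figure~\ref{fig:proof2}, each tile of $S_2$ with an added adjacent square (up to symmetry) together with an explicit retiling, which is exactly the case analysis you carry out in words. Your symmetry reductions and the specific retilings you give (including the plus-plus-corner case splitting into two L-tiles and the tip case into a domino plus a T-tile) match the paper's figure, and using reflections to cut down cases is harmless here since every tile in $S_2$ is achiral.
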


\begin{proof}
Lemma~\ref{lemma:addsquare} is proved by going through all the cases in Figure~\ref{fig:proof2}.  Examples that are symmetric up to rotation are not repeated.

\begin{figure}[h]
\begin{tikzpicture}[scale=0.40]

\draw (0,0) -- (2,0)-- (2,1)--(0,1)--(0,0); 
\draw (0,1) -- (1,2);
\draw (1,1) -- (0,2);
\draw[->] (2.5,0.5) -- (3.5,0.5);
\draw (4,0)--(6,0)--(6,1)--(5,1)--(5,2)--(4,2)--(4,0);

\draw (9,0) -- (11,0) -- (11,1) -- (9,1) -- (9,0);
\draw (8,0) -- (9,1);
\draw (9,0) -- (8,1);
\draw[->] (11.5,0.5) -- (12.5,0.5);
\draw (13,0) -- (16,0) -- (16,1) -- (13,1) -- (13,0);

\draw (0,3) -- (3,3) -- (3,4) -- (0,4) -- (0,3);
\draw (0,4) -- (1,5);
\draw (1,4) -- (0,5);
\draw[->] (3.5,3.5) -- (4.5,3.5);
\draw (5,3) -- (8,3) -- (8,4) -- (6,4) -- (6,5) -- (5,5) -- (5,3);
\draw (6,3) -- (6,4);

\draw (10,3) -- (13,3) -- (13,4) -- (10,4) -- (10,3);
\draw (11,4) -- (12,5);
\draw (12,4) -- (11,5);
\draw[->] (13.5,3.5) -- (14.5,3.5);
\draw (15,3) -- (18,3) -- (18,4) -- (17,4) -- (17,5) -- (16,5) -- (16,4) -- (15,4) -- (15,3);

\draw (20,3) -- (21,4);
\draw (21,3) -- (20,4);
\draw (21,3) -- (24,3) -- (24,4) -- (21,4) -- (21,3);
\draw[->] (24.5,3.5) -- (25.5,3.5);
\draw (26,3) -- (30,3) -- (30,4) -- (26,4) -- (26,3);
\draw (28,3) -- (28,4);

\draw (0,6) -- (2,6) -- (2,7) -- (1,7) -- (1,8) -- (0,8) -- (0,6);
\draw (0,8) -- (1,9);
\draw (1,8) -- (0,9);
\draw[->] (2.5,6.5) -- (3.5,6.5);
\draw (4,6) -- (6,6) -- (6,7) -- (5,7) -- (5,9) -- (4,9) -- (4,6);
\draw (4,7) -- (5,7);

\draw (8,6) -- (10,6) -- (10,7) -- (9,7) -- (9,8) -- (8,8) -- (8,6);
\draw (9,7) -- (10,8);
\draw (10,7) -- (9,8);
\draw[->] (10.5,6.5) -- (11.5,6.5);
\draw (12,6) -- (14,6) -- (14,8) -- (12,8) -- (12,6);
\draw (12,7) -- (14,7);

\draw (16,7) -- (18,7) -- (18,8) -- (17,8) -- (17,9) -- (16,9) -- (16,7);
\draw (17,7) -- (18,6);
\draw (18,7) -- (17,6);
\draw[->] (18.5,7.5) -- (19.5,7.5);
\draw (20,7) -- (21,7) -- (21,9) -- (20,9) -- (20,7);
\draw (21,8) -- (22,8) -- (22,6) -- (21,6) -- (21,7);

\draw (24,7) -- (26,7) -- (26,8) -- (25,8) -- (25,9) -- (24,9) -- (24,7);
\draw (24,7) -- (25,6);
\draw (24,6) -- (25,7);
\draw[->] (26.5,7.5) -- (27.5,7.5);
\draw (28,6) -- (29,6) -- (29,7) -- (30,7) -- (30,8) -- (29,8) -- (29,9) -- (28,9) -- (28,6);

\draw (0,10) -- (3,10) -- (3,11) -- (2,11) -- (2,12) -- (1,12) -- (1,11) -- (0,11) -- (0,10);
\draw (1,12) -- (2,13);
\draw (2,12) -- (1,13);
\draw[->] (3.5,10.5) -- (4.5,10.5);
\draw (5,10) -- (8,10) -- (8,11) -- (7,11) -- (7,13) -- (6,13) -- (6,11) -- (5,11) -- (5,10);
\draw (6,11) -- (7,11);

\draw (10,10) -- (13,10) -- (13,11) -- (12,11) -- (12,12) -- (11,12) -- (11,11) -- (10,11) -- (10,10);
\draw (13,11) -- (12,12);
\draw (12,11) -- (13,12);
\draw[->] (13.5,10.5) -- (14.5,10.5);
\draw (15,10) -- (18,10) -- (18,12) -- (16,12) -- (16,11) -- (15,11) -- (15,10);
\draw (16,11) -- (18,11);

\draw (20, 11) -- (23,11) -- (23,12) -- (22,12) -- (22,13) -- (21,13) -- (21,12) -- (20,12) -- (20,11);
\draw (22,11) -- (23,10);
\draw (23,11) -- (22,10);
\draw[->] (23.5,11.5)--(24.5,11.5);
\draw (25, 11) -- (27,11) -- (27,10) -- (28,10) -- (28,12) -- (27,12) -- (27,13) -- (26,13) -- (26,12) -- (25,12) -- (25,11);
\draw (27,11) -- (27,12);

\draw (0,15) -- (3,15) -- (3,16) -- (2,16) -- (2,17) -- (1,17) -- (1,16) -- (0,16) -- (0,15);
\draw (1,15) -- (2,14);
\draw (2,15) -- (1,14);
\draw[->] (3.5, 15.5) -- (4.5,15.5);
\draw (5, 15) -- (6,15) -- (6, 14) -- (7,14) -- (7,15) -- (8,15) -- (8,16) -- (7,16) -- (7,17) -- (6,17) -- (6,16) -- (5, 16) -- (5, 15);

\draw (10,14) -- (13, 14) -- (13, 15) -- (12,15) -- (12, 16) -- (11, 16) -- (11, 15) -- (10, 15) -- (10, 14);
\draw (13,14) -- (14, 15);
\draw (14, 14) -- (13, 15);
\draw[->] (14.5, 14.5) -- (15.5, 14.5);
\draw (16,14) -- (20, 14) -- (20, 15) -- (18,15) -- (18, 16) -- (17, 16) -- (17, 15) -- (16, 15) -- (16, 14);
\draw (18,14) -- (18, 15);

\draw (0,19) -- (1,19) -- (1,18) -- (2,18) -- (2,19) -- (3,19) -- (3,20) -- (2,20) -- (2,21) -- (1,21) -- (1,20) -- (0,20) -- (0,19);
\draw (0,19) -- (1,18);
\draw (1,19) -- (0,18);
\draw[->] (3.5,19.5) -- (4.5,19.5);
\draw (5,19) -- (5,18) -- (7,18) -- (7,19) -- (8,19) -- (8,20) -- (7,20) -- (7,21) -- (6,21) -- (6,20) -- (5,20) -- (5,19);
\draw (5,19) -- (7,19);

\draw (10,19) -- (11,19) -- (11,18) -- (12,18) -- (12,19) -- (13,19) -- (13,20) -- (12,20) -- (12,21) -- (11,21) -- (11,20) -- (10,20) -- (10,19);
\draw (13,19) -- (14,20);
\draw (14,19) -- (13,20);
\draw[->] (14.5,19.5) -- (15.5,19.5);
\draw (16,19) -- (17,19) -- (17,18) -- (18,18) -- (18,19) -- (20,19) -- (20,20) -- (18,20) -- (18,21) -- (17,21) -- (17,20) -- (16,20) -- (16,19);
\draw (18,19) -- (18,20);

\end{tikzpicture} \\
\caption{Proof of Lemma~\ref{lemma:addsquare}}
\label{fig:proof2}
\end{figure}
\end{proof}

\begin{Defn}
Let $S$ be a fountain set. Let $B_S$ be the smallest subset of $S$ such that for every tile $b \in B_S$, no tile $t \in S \setminus b$ can fit inside of tile $b$. Call $B_S$ the set of \emph{generating tiles} of $S$. 
\end{Defn}

For example, in the tile set $S_2$, the domino is the only generating tile, because it can fit inside of every other tile in $S_2$.

\begin{thm}\label{thm:5tiles}
Let $S$ be a fountain set in dimension $d$. Let $B_S$ be the set of generating tiles of $S$. Let $\mathcal{R}$ be the set of all connected polyominoes that can fit at least one copy of any tile $b \in B_S$. 

The set $S$ can tile any polyomino $R \in \mathcal{R}$. Furthermore, there exists a simple algorithm to generate a tiling of $R$ with $S$ when one exists. 
\end{thm}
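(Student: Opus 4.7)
The plan is to prove the theorem by a greedy incremental construction that exploits the fountain property to extend a valid tiling one unit cube at a time. First I would produce an initial partial tiling: since $R \in \mathcal{R}$, at least one generating tile $b_0 \in B_S$ fits inside $R$, so I would locate such a placement by scanning $R$ and place $b_0$ to form the initial tiled region $T_0 \subseteq R$. This initialization step takes $O(n)$ time in the worst case.

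Next I would process the remaining cubes of $R \setminus T_0$ in an order that preserves adjacency with the already-tiled region, for instance using a breadth-first traversal rooted at $b_0$ inside the adjacency graph of cells of $R$. This produces an ordering $c_1, c_2, \ldots, c_m$ of $R \setminus T_0$ such that each $c_i$ is adjacent to some cube of $T_{i-1} := T_0 \cup \{c_1, \ldots, c_{i-1}\}$; such an ordering exists because $R$ is connected. The key invariant I would maintain is: at each stage $i$, the region $T_i$ carries a valid tiling by $S$.

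The core inductive step is where the fountain property does the work. Given a valid tiling of $T_{i-1}$ and a new cube $c_i$ adjacent to some already-covered cube $c \in T_{i-1}$, I identify the tile $t \in S$ in the current tiling that contains $c$. The region $t \cup \{c_i\}$ is precisely a tile in $S$ together with one adjacent unit cube, so by the definition of a fountain set it admits a tiling by $S$. I would retile $t \cup \{c_i\}$ according to this fountain decomposition (which can be looked up in a fixed finite table indexed by the tile shape of $t$ and the position of $c_i$ relative to $t$), leaving all other tiles of $T_{i-1}$ untouched. Since $t \cup \{c_i\} \subseteq R$, the new tiles lie entirely within $R$, so the resulting tiling of $T_i$ is valid and the invariant is preserved. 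After $m$ steps we obtain a tiling of $T_m = R$.

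The main subtlety, and the step I expect to require the most care in the write-up, is arguing that the local retiling step is truly $O(1)$: this relies on the fact that the fountain property concerns only regions of bounded size (the maximum tile size in $S$ plus one), so the lookup table and the bookkeeping for the updated tiling around $c_i$ are constant-time operations. Combined with the $O(n)$ initialization and the $O(n)$ BFS traversal, this yields the claimed linear-time algorithm. Finally, the theorem's existence claim follows immediately because the algorithm terminates with a valid tiling whenever $R \in \mathcal{R}$.
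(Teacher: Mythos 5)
Your proposal is correct and takes essentially the same approach as the paper: both start from a placement of a generating tile and grow the tiled region one adjacent unit cube at a time, invoking the fountain property to retile the tile containing the neighbouring covered cube together with the new cube (the paper's ALG2). Your write-up is if anything slightly more careful --- the explicit BFS ordering and the constant-time lookup argument make precise details the paper leaves implicit --- but the underlying induction is identical.
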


For example, for the set $S_2$ for which the domino is the single generating tile, the set of connected polyominoes that can fit at least one copy of the domino is all polyominoes of size at least 2. Thus according to Theorem~\ref{thm:5tiles}, set $S_2$ can tile any polyomino of size at least 2. 

\begin{proof}[Proof of Theorem~\ref{thm:5tiles}]
The proof works by induction on the size of the polyomino $R$ which can fit generating tile $b \in S$. Let $R_k$ be a connected subregion of $R$ such that $|R_k| = k$. \\

\textbf{If $k=|b|$ (base case)}. Since $R_{|b|}$ must be able to fit one copy of a tile $b \in T$, then it is exactly tiled by $b$, which is a tile in $S$. \\

\textbf{If $|b|<k\leq|R|$ (general case)}. Assume the induction hypothesis, that $R_{k-1}$ is tileable with $S$.

Since $R_{k-1}$ is a subregion of $R$ and $R$ is connected, there exists a unit cube $u$ such that $u \in R$, but $u \notin R_{k-1}$, and $u$ is adjacent to another unit cube $v \in R_{k-1} $. Since $R_{k-1}$ is tileable, there exists a valid tiling of $R_{k-1}$, and in this tiling, unit cube $v$ is tiled by a certain tile $s \in S$.  Since $S$ is a fountain set, we can retile the polyomino made up of $s$ and $x$ with $S$. This gives us a valid tiling for of $R_k$.\\

This proof also yields a constructive algorithm to find a tiling of $R$ if one exists, by starting with a single $b$ tile placed in $R$, and then adding one unit cube at a time, retiling locally at each step. Call this algorithm ALG2.  
\end{proof}

\begin{cor}
The set $S_2$ can tile any polyomino of size at least 2.\\ 
\end{cor}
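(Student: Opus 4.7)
The plan is to deduce this corollary as an immediate application of Theorem~\ref{thm:5tiles} specialized to the set $S_2$. The key input for that theorem is the set $B_{S_2}$ of generating tiles, so my first step will be to identify $B_{S_2}$ and show that it consists of a single element, namely the domino.

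To verify that the domino is the only generating tile, I would go through the five tiles of $S_2$ in Figure~\ref{fig:5tiles} and check which of them can contain a copy of some other tile of $S_2$. The L-tile, the $3$-bar, the T-tile, and the plus-tile each strictly contain a domino as a sub-polyomino (up to rotation), so none of them can lie in the minimal generating set. On the other hand, no tile of $S_2 \setminus \{\text{domino}\}$ fits inside the domino, since all other tiles have size at least $3$ while the domino has size $2$. Hence $B_{S_2} = \{\text{domino}\}$.

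Next I would identify the family $\mathcal{R}$ from the statement of Theorem~\ref{thm:5tiles}: it is the set of connected polyominoes that can accommodate at least one copy of the domino. A connected polyomino $R \subset \mathbb{Z}^2$ fits a domino (in some orientation) if and only if it contains two adjacent unit squares, which happens if and only if $|R| \geq 2$, since a connected polyomino of size at least $2$ necessarily has a pair of adjacent cells. Thus $\mathcal{R}$ is exactly the class of connected polyominoes of size at least $2$.

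Having combined Lemma~\ref{lemma:addsquare}, which gives the fountain property of $S_2$, with the computation of $B_{S_2}$ above, I would then invoke Theorem~\ref{thm:5tiles} directly: $S_2$ tiles every polyomino in $\mathcal{R}$, and the algorithm ALG2 constructs the tiling. I do not expect any real obstacle here; the only point requiring a little care is the verification that no tile of $S_2$ other than the domino is contained in any other tile of $S_2$, which is a finite case check that I would dispatch by inspection of Figure~\ref{fig:5tiles}.
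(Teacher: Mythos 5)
Your proposal matches the paper's reasoning exactly: the paper states (just before Theorem~\ref{thm:5tiles}) that the domino is the unique generating tile of $S_2$, that the polyominoes fitting a domino are precisely the connected polyominoes of size at least~$2$, and then derives the corollary by combining Lemma~\ref{lemma:addsquare} with Theorem~\ref{thm:5tiles}. Your finite case check that no other tile of $S_2$ lies inside the domino (and that the domino lies inside every other tile) is the same verification the paper relies on, so the two arguments coincide.
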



\subsection{Properties of Fountain Sets}

\subsubsection*{Generating Fountain Sets}

To generate a fountain set, we can choose one or multiple generating tiles to form the set $B_S$. Aside from the requirement that they cannot fit inside of each other, the generating tiles can be any polyomino. The following simple procedure completes the fountain set $S$:

\begin{algorithm}
\caption{FSGEN}
\begin{algorithmic}[1]
\State \textbf{Input}: A set of generating tiles $B_S$. 
\State $S \leftarrow B_S$
\While {there exists a tile $b \in S$ such that the region composed of tile $b$ with an adjacent unit cube $u$ is not tileable by $S$}
      \State let tile $t$ be $b$ with the unit cube $u$.
        \State $S \leftarrow S \cup t$.
  \EndWhile
\end{algorithmic}
\end{algorithm}

Notice that FSGEN works not only for square grids in any dimension, but also on triangular or hexagonal grids. 

\begin{lem}\label{fsgen}
Any set $S$ generated by FSGEN is a minimal fountain set.
\end{lem}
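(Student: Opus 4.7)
The plan is to split the proof into two parts: showing that the output $S$ is a fountain set, and then that it is minimal.

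For the fountain property, I will appeal to the termination condition of the while loop in FSGEN: the loop exits exactly when for every $b \in S$ and every adjacent unit cube $u$, the region $b \cup u$ is already tileable by $S$. This is precisely the defining condition of a fountain set, so once FSGEN halts the output $S$ satisfies it.

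For minimality, I need to show that for every $t \in S$, removing $t$ breaks the fountain property. When $t \notin B_S$, the algorithm added $t$ at some step $i$ because there exist $b_t \in S^{(i-1)}$ and an adjacent unit cube $u_t$ such that $t = b_t \cup u_t$ and $b_t \cup u_t$ was not tileable by $S^{(i-1)}$. I plan to take $(b_t, u_t)$ as the witness of failure for $S \setminus \{t\}$: the region $b_t \cup u_t$ has size exactly $|t|$, so any hypothetical $S \setminus \{t\}$-tiling must be built from tiles of size at most $|t|$, and in fact strictly less than $|t|$, since $t$ is the unique tile in $S$ whose shape equals $b_t \cup u_t$ up to rotation. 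When $t \in B_S$, I will use the defining property of the generating set: no tile of $S \setminus \{t\}$ fits inside $t$, so the region $t$ itself is not tileable by $S \setminus \{t\}$; to convert this into a fountain-set failure I will look for a region of the form $b \cup u$ with $b \in S \setminus \{t\}$ whose only $S$-tiling involves at least one copy of $t$, obtained by attaching $t$ to a suitable tile of $S \setminus \{t\}$ and checking that no alternative decomposition avoiding $t$ can exist.

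The main obstacle is closing the first case, because the final $S$ might contain tiles of size less than $|t|$ that were added to $S$ after step $i$ and were therefore absent from $S^{(i-1)}$. I plan to address this by first establishing that the output of FSGEN is independent of the order in which tiles are added, a confluence-style statement; this then allows me to assume without loss of generality that tiles are processed in non-decreasing order of size. Under this assumption, every tile of $S$ of size less than $|t|$ is already in $S^{(i-1)}$, and the non-tileability of $b_t \cup u_t$ at step $i$ transfers directly to $S \setminus \{t\}$. The other delicate step will be the explicit construction of the witness for generators, which I expect to handle by a small case analysis exploiting the atomicity of generating tiles.
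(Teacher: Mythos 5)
Your first half (the fountain property follows from the termination condition of the while loop) is exactly the paper's argument. For minimality, the paper gives only the naive one-line argument that you correctly identify as gappy: it says each added tile $t$ witnesses non-tileability of $b_t\cup u_t$ ``by $S$'' without noticing that the relevant $S$ is the set \emph{at the time of addition}, and that tiles added later (possibly smaller than $t$) could in principle tile $b_t\cup u_t$ without $t$. The paper also never addresses whether a \emph{generating} tile can be removed. So your diagnosis of the difficulty is more careful than the paper's own proof. The problem is that your proposed repairs are not yet proofs, and the main one is circular in spirit.

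Concretely: (1) The confluence claim --- that FSGEN's output is independent of the processing order --- is doing all the work in your first case, and it is not obviously true; FSGEN is genuinely nondeterministic, and a pair $(b,u)$ that fails the tileability test early in one run may pass it later in another run after other tiles have been added, so the two runs could a priori terminate with different sets. Worse, the natural attempt to prove confluence (take a minimal-size tile in the symmetric difference of two outputs and derive a contradiction) runs through essentially the same ``a later-added smaller tile could retile $b_t\cup u_t$'' issue, so confluence appears to be roughly as strong as the minimality statement you are trying to deduce from it; you would need to either prove it by a genuine induction on tile size or restructure the argument to work for an arbitrary run directly. (2) For $t\in B_S$, note that $t\cup u$ is \emph{not} an admissible witness, since the fountain property of $S\setminus\{t\}$ quantifies only over $b\in S\setminus\{t\}$; your plan to ``attach $t$ to a suitable tile of $S\setminus\{t\}$ and check that no alternative decomposition avoids $t$'' is exactly the hard step, and no construction is given. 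Until the confluence lemma and the generating-tile witness are actually established, the proposal does not close the proof --- though it is worth saying plainly that the published proof does not close it either.
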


\begin{proof}
We first prove that any $S$ generated by FSGEN is a fountain set. FSGEN proceeds until for every tile $s$ in $S$, any tile composed of $s$ plus any adjacent unit cube is tileable by $S$. Thus by definition, once FSGEN terminates, $S$ is a fountain set. 

We now prove that $S$ is a minimal fountain set. In the algorithm FSGEN, every tile $t$ is added because it is a combination of a tile in $S$ and an adjacent unit cube that cannot be tiled by $S$ (see lines 6 and 7 of Algorithm 2). Thus if $t$ was never added to $S$, the fountain property would be broken. Therefore, the final $S$ generated by FSGEN is minimal. 
\end{proof}

\subsubsection*{Size of Fountain Sets}

Another way to look at set $S_2$ from Figure~\ref{fig:5tiles} is to see that it is composed of all possible connected subsets of the plus-tile up to rotation (call these \emph{subtiles}), excluding the unit square. The 2-dimensional plus-tile has one ``central" square and four ``spokes". Extending this idea, a generalized plus-tile in $d$ dimensions has one ``central" cube, and $2d$ ``spokes." Each of its subtiles will have this central cube, and some subset of the spokes. 

\begin{lem}\label{same}
The set composed of all subtiles of the $d$-dimensional plus-tile excluding the unit cube is the set $S_d$ generated by FSGEN using the $d$-dimensional domino as its generating tile. 
\end{lem}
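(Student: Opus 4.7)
The plan is to prove by double inclusion that the output $S_d$ of FSGEN from $B_S=\{\text{domino}\}$ equals the set $P_d$ of all connected subtiles of the $d$-dimensional plus-tile, excluding the unit cube. I rely on the structural fact that each $t\in P_d$ has a distinguished center cube $c$ together with some subset of the $2d$ face-neighbors of $c$ as spokes, and that every spoke cube has $c$ as its unique neighbor inside $t$.

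I would first verify that $P_d$ is itself a fountain set. For $t\in P_d$ with center $c_t$ and any adjacent cube $u$, a case analysis suffices: either $u$ is adjacent to $c_t$, in which case $u$ becomes a new spoke and $t+u\in P_d$; or $u$ is adjacent to some spoke $p$ of $t$, in which case $t+u$ decomposes as the domino $\{p,u\}$ together with the smaller subtile $t\setminus\{p\}$. The boundary case $t=\text{domino}$ is handled separately, since then $t+u$ is directly a 3-bar or an L-tromino, both of which lie in $P_d$.

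For the inclusion $P_d\subseteq S_d$, I argue by induction on the number of spokes $k\ge 1$. The base case is the domino. For the inductive step, given a subtile $U$ with $k+1$ spokes, I remove one spoke $p$ to obtain $U'\in S_d$ by the inductive hypothesis, and invoke the fountain property of $S_d$ (Lemma~\ref{fsgen}) to conclude that $U=U'+p$ is tileable by $S_d$. The geometric key is that in any such tiling the tile covering the center $c$ of $U$ must also cover every spoke: if it did not, the uncovered spoke cube would be isolated in the residual region and could not be covered by any tile of size at least $2$. Hence the tile at $c$ equals $U$ itself, so $U\in S_d$.

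For the reverse inclusion $S_d\subseteq P_d$, I would run FSGEN in a breadth-first order that processes extensions of smaller tiles first, and induct on the size of each added tile. If $t=s+u$ is added with $s\in P_d$ by the inductive hypothesis, the same case analysis as in the fountain-set check applies: either $t\in P_d$ directly, or $s$ has at least two spokes, in which case $t$ is tileable by the domino $\{p,u\}$ together with $s\setminus\{p\}$ — a smaller subtile already in $S$ by the breadth-first order combined with the forward inclusion — contradicting FSGEN's premise for adding $t$. The main obstacle is precisely this dependence on execution order, since an adversarial schedule could in principle append a non-subtile before the relevant smaller subtile is available; I plan to circumvent this by fixing the breadth-first order above, or equivalently by appealing to Lemma~\ref{fsgen} and verifying that $P_d$ itself is a minimal fountain set containing $B_S$, which together with the forward inclusion forces equality.
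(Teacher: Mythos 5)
Your proof is correct, but it takes a genuinely different route from the paper's. The paper proves the lemma by exhibiting a common characterization of both sets: it argues that the subtiles of the plus-tile are exactly the polyominoes of size at least $2$ into which one cannot pack two disjoint dominoes, and then that FSGEN's output is exactly this same family (every tile it adds contains a domino, so a region that cannot fit two dominoes must be covered by a single tile of the set; a region that can fit two dominoes is already tileable and is rejected at the tileability test). Your argument instead does a direct double inclusion: the forward inclusion by induction on the number of spokes, using the fountain property of $S_d$ plus the observation that a spoke whose center is already covered would be isolated (forcing the tile at the center to be the whole subtile), and the reverse inclusion by tracking FSGEN's execution and decomposing any non-subtile extension as a domino plus a smaller subtile. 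Your approach is longer but more self-contained and, notably, more careful on two points the paper glosses over: it does not need the unproved claim that every non-subtile of size at least $2$ admits two disjoint dominoes, and it explicitly confronts the order-dependence of FSGEN's nondeterministic while-loop, which the paper ignores entirely. The paper's approach buys a clean structural characterization of $S_d$ (useful elsewhere) at the cost of leaning on Theorem~\ref{thm:5tiles} and on a couple of asserted-but-unverified steps. One caution: your fallback alternative for the reverse inclusion --- deducing equality from minimality of $S_d$ via Lemma~\ref{fsgen} --- is the weakest link, since minimality as defined only forbids removing a single tile and does not obviously rule out $S_d \supsetneq P_d$; stick with the breadth-first scheduling argument, which is sound.
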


Where the $d$-dimensional domino is the tile composed of two adjacent $d$-dimensional unit cubes. 

\begin{proof}
Let $A$ be the set composed of all the subtiles of the $d$-dimensional plus-tile, excluding the unit cube. Let $B$ be the set generated by FSGEN using the $d$-dimensional domino as its generating tile. 

Since the plus-tile cannot fit two copies of the domino tile, neither can any of its subtiles. Therefore, no tile in $A$ can fit two copies of the domino tile. Furthermore, adding any single adjacent cube to the plus-tile allows it to fit 2 copies of the domino. Therefore, $A$ is the set of all tiles excluding the unit cube that cannot fit two copies of the domino tile. 

On the other hand, any polyomino that cannot fit two dominoes also cannot fit two supertiles of the domino. Therefore, since $B$ is a fountain set and therefore can tile any polyomino of size at least 2, it includes all tiles that cannot fit two copies of the domino tile. Furthermore, $B$ will not include any other tiles, because they are tileable by two dominoes, so in line 6 of Algorithm 2 they will not pass the if statement. 

Therefore, sets $A$ and $B$ are the same set. 
\end{proof}

\begin{cor}
The set $S_d$ composed of the  generalized $d$-dimensional plus-tile and all of its subtiles excluding the unit cube is a fountain set.
\end{cor}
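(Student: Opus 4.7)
The plan is to derive this corollary directly from the two preceding lemmas, which together do essentially all of the work. First I would invoke Lemma~\ref{same}, which identifies two a priori different sets: on the one hand, the set composed of all subtiles of the $d$-dimensional plus-tile excluding the unit cube, and on the other hand, the set $S_d$ produced by running FSGEN on the single generating tile given by the $d$-dimensional domino. Lemma~\ref{same} asserts these two sets coincide.

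Next I would apply Lemma~\ref{fsgen}, which shows that any set output by the FSGEN procedure is a (minimal) fountain set, provided FSGEN terminates. Chaining these two statements, the set described in the corollary is equal to $S_d$, and $S_d$ is a fountain set; this yields the claim.

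The only potentially nontrivial point is termination of FSGEN in this particular instance: one must know that the generation procedure started from the $d$-dimensional domino actually halts, so that Lemma~\ref{fsgen} applies. But this is already implicit in the proof of Lemma~\ref{same}: every tile added by FSGEN is a subtile of the $d$-dimensional plus-tile (since it cannot contain two copies of the domino), and there are only finitely many such subtiles up to rotation. Hence FSGEN terminates after finitely many steps, and the output coincides with the set of all subtiles of the plus-tile other than the unit cube.

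Thus there is no real obstacle to this corollary beyond a one-line reference to the previous two lemmas; I would simply write the proof as: by Lemma~\ref{same} the set in the statement equals $S_d$, and by Lemma~\ref{fsgen} the set $S_d$ is a fountain set, so we are done.
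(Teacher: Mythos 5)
Your proposal is correct and matches the paper's (implicit) argument exactly: the corollary is stated without proof precisely because it follows by chaining Lemma~\ref{same} with Lemma~\ref{fsgen}. Your extra remark on the termination of FSGEN is a reasonable point of care but does not change the route.
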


To calculate the size of $S_d$, we will determine how many subtiles of the $d$-dimensional plus-tile there are. First we will count how many different subtiles with exactly $k$ spokes there are, and then sum over the number of spokes possible. 

\begin{lem} 
\label{lem:spokes}
There are $\lfloor \frac{k}{2}\rfloor +1$ different subtiles of the plus-tile with exactly $k$ spokes in dimension $d \ge k$.
\end{lem}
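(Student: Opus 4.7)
The plan is to parametrize subtiles by a simple numerical invariant, show that this invariant is a complete rotation-invariant, and then count its admissible values. The $d$-dimensional plus-tile has a central cube with $2d$ spokes, naturally grouped into $d$ pairs, one per coordinate axis. A subtile with exactly $k$ spokes amounts to a choice of $k$ of these $2d$ spokes; to such a choice I attach the triple $(n_0,n_1,n_2)$ counting the axes having $0$, $1$, and $2$ selected spokes respectively. These counts satisfy $n_0+n_1+n_2=d$ and $n_1+2n_2=k$, so the triple is determined by $n_1$ alone.

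Next I would show that $(n_0,n_1,n_2)$ is a complete invariant under rotations of the plus-tile. Invariance is immediate: every symmetry of the plus-tile permutes the axes and may flip spokes within each axis, and all three counts are preserved. For completeness, given two subtiles $A$ and $B$ with the same triple, I would first permute the axes to align their types, then sign-flip the single-spoke axes as needed; this produces a signed permutation sending $A$ to $B$. If that permutation happens to be orientation-reversing, I would compose it with an orientation-reversing stabilizer of $B$: a sign-flip of any axis with $0$ or $2$ spokes suffices, and in the edge case $n_0=n_2=0$ (possible only when $d=k$), a transposition of two single-spoke axes that share a chosen direction, or a transposition combined with flipping both of those axes' signs, provides such an odd stabilizer element.

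It then remains to count the admissible triples. Using $n_2=(k-n_1)/2$ and $n_0=d-(k+n_1)/2$, the constraints reduce to $n_1 \equiv k \pmod{2}$, $0 \leq n_1 \leq k$, and $n_1 \leq 2d-k$. When $d \geq k$, the last bound is implied by $n_1 \leq k$, so $n_1$ ranges over $\{0,2,\ldots,k\}$ or $\{1,3,\ldots,k\}$ depending on the parity of $k$, yielding exactly $\lfloor k/2 \rfloor + 1$ values in each case.

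I expect the main obstacle to be the completeness step in the rotation-only setting, namely ensuring that an orientation-preserving signed permutation (and not merely a general signed permutation) carries any subtile to any other with the same triple. In most configurations an orientation-reversing stabilizer element is immediate from sign-flipping an empty or full axis, but the edge case in which every axis holds exactly one spoke requires slightly more care to produce the required odd element of the stabilizer.
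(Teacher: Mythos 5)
Your proposal is correct and follows essentially the same route as the paper: both classify a $k$-spoke subtile by how the spokes are distributed among the axes (your $n_1$ is equivalent to the paper's count of occupied axes, which ranges from $\lceil k/2\rceil$ to $k$), yielding $\lfloor k/2\rfloor+1$ configurations. The only substantive addition is your verification that this invariant is complete under orientation-preserving symmetries, including the edge case $n_0=n_2=0$ --- a point the paper's one-line proof takes for granted --- and your handling of that case is correct for $d=k\ge 2$.
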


\begin{proof}
 The $k$ spokes can be either: all along different axes, or along $k-1$ axes (with two spokes along the same one), or along $k-2$ axes, etc, or along $\lceil \frac{k}{2} \rceil$ axes. This gives us $k - \lceil \frac{k}{2} \rceil + 1 = \lfloor \frac{k}{2} \rfloor +1$ distinct configurations for a tile with $k$ spokes in dimension $d \geq k$.  \\
\end{proof}

In dimension $d$, there are subtiles of the plus-tile with 1 or 2 or 3 or ... or $2d$ spokes. For tiles with $1,2,...,d$ spokes, Lemma~\ref{lem:spokes} gives the number of possible configurations. For tiles with $d+1, d+2, ..., 2d$ spokes, by symmetry of the empty spoke locations, a tile with $2d-x$ spokes will have the same number of possible configurations as a tile with $x$ spokes, for $x < d$. Also, there is exactly one configuration for the tile with $2d$ spokes. \\

Summing over all the possibilities to get the total number of subtiles of the plus-tile, to get the size of $S_d$:
\[|S_d| = \sum_{k=1}^{d} \left( \left\lfloor \frac k2 \right\rfloor +1 \right) + \sum_{k=d+1}^{2d-1} \left( \left\lfloor \frac{2d-k}{2}\right\rfloor +1 \right) + 1\]
\[|S_d| = \sum_{k=1}^{d} \left( \left\lfloor \frac k2 \right\rfloor +1 \right) + \sum_{j=1}^{d-1} \left( \left\lfloor \frac{j}{2}\right\rfloor +1 \right) + 1\]
\[|S_d| = 2 \cdot \sum_{k=1}^{d-1} \left( \left\lfloor \frac k2 \right\rfloor +1 \right) + \left\lfloor \frac d 2\right\rfloor + 1 + 1 = \frac{d(d+3)}{2}\]
completing the proof of Theorem~\ref{thm:ndimensions}.


\subsection{An Application of Fountain Sets}
\label{sec:applications}

We consider one further classical tiling problem, tiling with the set $S_a$, composed of the domino, L-tile, 3-bar, and T-tile. The algorithm to tile with $S_a$ begins by tiling with $S_2$ using ALG2, then retiling the subregions containing plus-tiles.

\begin{thm}
\label{thm:setta}
Let $R$ be a connected polyomino in the plane $\mathbb{Z}^{2}$, and let $n=|R|$ be the size of $R$. There exists an algorithm that decides in polynomial time whether $R$ can be tiled with the set $S_a$ composed of the domino, L-tile, 3-bar, and T-tile.
\end{thm}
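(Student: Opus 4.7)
The plan is to build on ALG2 (the algorithm of Theorem~\ref{thm:5tiles}) by adding a postprocessing phase that eliminates plus-tiles, since $S_a = S_2 \setminus \{\text{plus-tile}\}$. First, the algorithm runs ALG2 on $R$ to produce an $S_2$-tiling $\mathcal{T}$, which exists whenever $|R|\geq 2$ by Theorem~\ref{thm:5tiles}. If $\mathcal{T}$ uses no plus-tiles, it is already a valid $S_a$-tiling and the algorithm halts. Otherwise, for each plus-tile $p$ in $\mathcal{T}$, we form the neighborhood $N(p)$ consisting of $p$ together with every tile of $\mathcal{T}$ sharing an edge with $p$. Since $p$ has only $12$ boundary edges and each neighboring tile has size at most $5$, $N(p)$ has size bounded by a constant. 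In constant time we exhaustively check whether $N(p)$ admits a tiling by tiles of $S_a$ compatible with its boundary in $R$; if so we perform the swap, and otherwise we flag $p$ as obstructed. After processing all plus-tiles, we output the resulting tiling when no obstruction remains, and ``not tileable'' otherwise.

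For correctness, the key claim is completeness: whenever $R$ is $S_a$-tileable, no plus-tile becomes obstructed. I would prove this via an analogue of Corollary~\ref{cor:localmovetransform} for $S_2$, namely that any two $S_2$-tilings of a region are connected by a finite list of bounded-size local moves among the five tiles of $S_2$. Applied to $\mathcal{T}$ and a hypothetical $S_a$-tiling of $R$, the sequence of local moves transforming one into the other must eventually remove every plus-tile; tracking where each plus-tile disappears yields a constant-radius $S_a$-retiling of a region containing $p$, which is exactly the swap that the algorithm attempts. The complexity follows immediately: ALG2 runs in $O(n\log n)$, there are at most $O(n)$ plus-tiles in $\mathcal{T}$, and each local check is $O(1)$, giving a polynomial (indeed near-linear) total running time.

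The main obstacle I anticipate is establishing the completeness claim, and in particular verifying that a radius-$1$ neighborhood $N(p)$ is always large enough to absorb a plus-tile when $R$ is $S_a$-tileable. This likely requires a case analysis of how $S_2$-tiles can fit around a plus-tile in $\mathcal{T}$, together with an extended local-move lemma for $S_2$ mirroring the $T$-case in Section~2. If the radius-$1$ fix proves insufficient in some configuration, a natural fallback is to iteratively enlarge $N(p)$ up to any polynomially bounded radius and search for an $S_a$-retiling at each scale; this still yields a polynomial-time decision procedure, at the cost of losing near-linear efficiency.
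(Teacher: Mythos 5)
Your high-level plan (run ALG2, then post-process to eliminate plus-tiles) matches the paper's, but the implementation you propose has a genuine gap at exactly the point you flag as the ``main obstacle.'' The paper's proof shows that a radius-$1$ retiling is \emph{not} always enough: when a plus-tile is adjacent only to a T-tile at the T-tile's central square, every admissible retiling of that pair necessarily reintroduces a plus-tile, so your algorithm would flag $p$ as obstructed and answer ``not tileable'' on regions that are in fact tileable. The paper handles this by letting the retiling \emph{propagate}: the newly created plus-tile is merged with its own neighbor, possibly creating yet another plus-tile, and the chain is shown to terminate precisely because the region is not one of the exceptional ``crenellated'' regions $\mathcal{C}$ (simply connected unions of overlapping plus shapes). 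That explicit characterization of the non-tileable regions --- $R$ is $S_a$-tileable iff $R\notin\mathcal{C}$ and $|R|\ge 2$, with membership in $\mathcal{C}$ checkable in $O(n)$ --- is the real content of the theorem, and it is absent from your proposal. Your completeness argument instead leans on an unproven local-move connectivity result for $S_2$-tilings, and even granting it, connectivity by local moves does not yield the conclusion you draw: the move sequence from $\mathcal{T}$ to an $S_a$-tiling may shuttle a plus-tile arbitrarily far across the region before destroying it, so no constant-radius retiling around the original position of $p$ need exist.

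Your fallback (enlarge $N(p)$ to polynomial radius and search exhaustively) does not rescue the argument either: exhaustive search for an $S_a$-tiling of a region of polynomial size is itself exponential-time, and deciding tileability of a subregion by $S_a$ is the very problem you are trying to solve. To repair the proof you would need the paper's two ingredients: (i) a proof that regions in $\mathcal{C}$ are not $S_a$-tileable (each terminal plus-shaped lobe forces a T-tile, cascading down to a bare plus shape), and (ii) a termination argument for the propagating plus-removal on regions outside $\mathcal{C}$.
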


\begin{proof}
We will prove that the only regions not tileable by $S_a$ are crenellated type regions such as the examples shown in Figure~\ref{crenellated}, call the set of these regions $\mathcal{C}$. Any $R \in \mathcal{C}$ is simply connected (no holes) and composed of only overlapped plus shapes. 

\begin{figure}[h]
\begin{tikzpicture}[scale=0.5]
\draw (0,0) -- (0,1) -- (1,1) -- (1,2) -- (2,2)-- (2,1) -- (3,1) -- (3,2) -- (4,2) -- (4,1) -- (5,1);
\draw (0,0) -- (1,0) -- (1,-1) -- (2,-1) -- (2,0) -- (3,0) -- (3,-1) -- (4,-1) -- (4,0) -- (5,0);

\draw[thick,dashed] (5,1) -- (5,2) -- (6,2) -- (6,1) -- (7,1);
\draw[thick,dashed] (5,0) -- (5,-1) -- (6,-1) -- (6,0)-- (7,0);

\draw (7,1) -- (7,2) -- (8,2) -- (8,1) -- (9,1);
\draw (7,0) -- (7,-1) -- (8,-1) -- (8,0) -- (9,0) -- (9,1);

\end{tikzpicture}
\begin{tikzpicture}[scale=0.5]
\draw (0,0) -- (0,1) -- (1,1) -- (1,2) -- (2,2)-- (2,1) -- (3,1) -- (3,2) -- (4,2) -- (4,1) -- (5,1) -- (5,0) -- (4,0) -- (4,-1);
\draw (0,0) -- (1,0) -- (1,-1) -- (2,-1) -- (2,0) -- (3,0) -- (3,-1) -- (2,-1) -- (2,-2) -- (3,-2) -- (3,-3) -- (4,-3) -- (4,-2) -- (5,-2) -- (5,-1) -- (4,-1);
\end{tikzpicture}
\caption{Examples of regions in $\mathcal{C}$.}
\label{crenellated}
\end{figure}
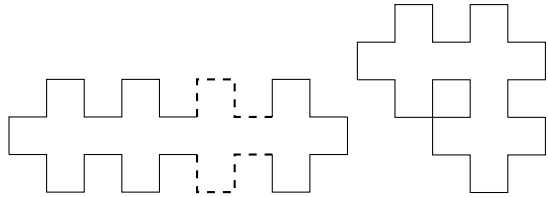

First we show that any $R \in \mathcal{C}$ cannot be tiled by $S_a$. Since $R$ is simply connected, it contains at least one plus-shaped subregion that is only connected to one side. Given the options of tiles in $S_a$, this subregion must be tiled with a T-tile, or else at least one square would be left isolated. Once this T-tile is placed, the remaining region is another $R' \in \mathcal{C}$. Thus, repeating the same logic, another T-tile has to be placed. This continues until the region is almost empty, at which point the only remaining untiled area is a single plus shape, which cannot be tiled by $S_a$. 

Now we show that any region $Q \notin \mathcal{C}$ of size greater than 1 can be tiled by $S_a$. Using ALG2, the region $Q$ can be tiled with $S_2$. Now consider every occurrence of the plus-tile in this tiling of $Q$. Since $Q$ is connected, each plus-tile has at least one adjacent square which is tiled by another tile, and thus can be retiled using the rules from Figure~\ref{fig:proof2}. In this step, since the goal is to remove all plus-tiles if possible, if a plus-tile is adjacent to multiple other tiles, the retiling should be chosen such as not to add a new plus-tile if possible. If not possible, then there is a plus-tile that is only connected to a T-tile at its central tile, and thus the retiling necessarily introduces a new plus-tile. 

However, since $Q \notin \mathcal{C}$, this new plus-tile must be connected to another tile. If this other tile is again only a T-tile connected at its central tile, again the retiling introduces another plus-tile. Again, because $Q \notin \mathcal{C}$, eventually a newly retiled plus-tile will be adjacent to a tile other than a T-tile at its central tile, and then that shape can be retiled without a plus-tile. 

Thus, for any given polyomino $R$, it suffices to check whether $R \in \mathcal{C}$ to determine whether it is tileable by $S_a$, and checking whether $R \in \mathcal{C}$ is an $O(n)$ process.
\end{proof}


\section{Discussion}

It is probably possible to use the fountain set $S_2$ to solve the classic tiling question for the tile set composed of the domino, 3-bar, and L-tile, similarly to how we solved it for set $S_a$ in Section~\ref{sec:applications}. Furthermore, it might then be possible to find how to remove the 3-bars as well, thus answering the classic tiling question for the tile set composed of only the domino and L-tile. If these questions are indeed answerable using this method, it may motivate further study of fountain sets and how they might systematically be used to answer tiling questions. 

In this paper we briefly studied the size and characterization of small fountain sets; it might be interesting in the future to consider more complicated fountain sets and attempt to generalize results. In particular, it might be interesting to search for more algebraic interpretations of fountain sets, perhaps relating them to other algebraic approaches to tiling problems. 

\section*{Acknowledgements}
The authors are thankful to Richard Kenyon for suggesting the initial problem, and to Igor Pak for suggesting references and helpful literature.

\newpage{}

\bibliographystyle{plain}

\bibliography{bibliography}

\end{document}